\newtheorem{lemma}{Lemma}
\newtheorem*{thm}{Theorem}
\newcommand{\h}{\mathbb{H}}
\begin{document}

\title{Thick triangulations of hyperbolic $n$-manifolds}
\author{William G. Breslin}
\date{August 20, 2007}

\begin{abstract}
We show that a complete hyperbolic $n$-manifold has a geodesic triangulation such that the tetrahedra contained in the thick part are $L$-bilipschitz diffeomorphic to the standard Euclidean $n$-simplex, for some constant $L$ depending only on the dimension and the constant used to define the thick-thin decomposition of $M$.
\end{abstract}

\maketitle


A geodesic triangulation of a complete hyperbolic n-manifold $M$ may be forced by the geometry of $M$ to have simplices which are either small or flat.  Big simplices without small dihedral angles cannot live in the thin part of $M$.  We show that a complete hyperbolic $n$-manifold $M$ has a geodesic triangulation such that the simplices contained in the thick part of $M$ are $L$-bilipschitz diffeomorphic to the standard Euclidean $n$-simplex, for some constant $L$ depending only on the dimension $n$ and the constant $\mu$ used to define the thick-thin decomposition of $M$. We call such a triangulation a \textit{$(\mu ,L)$-thick} geodesic triangulation of $M$.

\begin{thm}
Let $n \ge 2$.  Let $\mu$ be a Margulis constant for $\mathbb{H}^n$.  There exists a constant $L:=L(n,\mu)$ such that every complete hyperbolic $n$-manifold has a $(\mu, L)$-thick geodesic triangulation.
\end{thm}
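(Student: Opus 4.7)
The plan is to construct a geodesic triangulation from a suitably spaced net of vertices via a (perturbed) Delaunay-type construction, and then verify that simplices lying inside the thick part are uniformly bilipschitz to the standard $n$-simplex by exploiting the fact that the hyperbolic metric is nearly Euclidean at small scale. First I would fix a scale $\epsilon = \epsilon(n,\mu)$ with $\epsilon \ll \mu$, chosen so that the hyperbolic metric on any ball of radius $10\epsilon$ in $\mathbb{H}^n$ is $(1+\delta)$-bilipschitz to the Euclidean metric in the corresponding exponential chart, where $\delta$ is a small constant depending only on $n$. On a ball of this size inside the $\mu$-thick part of $M$, the exponential map is an isometric embedding, so the same estimate transfers to $M$.

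Next I would take a maximal $\epsilon$-separated subset $V\subset M$, which is automatically an $\epsilon$-net, and lift it to a $\pi_1(M)$-equivariant $\epsilon$-separated $\epsilon$-net $\tilde V\subset\mathbb{H}^n$. To avoid flat ``sliver'' simplices in the Delaunay construction, I would apply a small $\pi_1(M)$-equivariant perturbation of $\tilde V$ of size less than $\epsilon/100$, obtained by a genericity argument inside a fundamental domain (perturb the finitely many vertices there so that no $n+2$ are cospherical and no $n+1$ are nearly coplanar) and then propagated equivariantly. I would form the Voronoi decomposition of $\mathbb{H}^n$ with respect to the perturbed $\tilde V$ and take its dual Delaunay complex: a geodesic simplicial complex with small, equivariantly placed simplices, which descends to a geodesic triangulation $\mathcal{T}$ of $M$.

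To prove the bilipschitz estimate, let $\sigma\in\mathcal{T}$ be an $n$-simplex contained in the thick part. It lifts isometrically to a simplex $\tilde\sigma\subset\mathbb{H}^n$ of diameter $O(\epsilon)$, sitting inside a ball on which the hyperbolic and Euclidean metrics are $(1+\delta)$-bilipschitz equivalent. Viewed as a Euclidean object, $\tilde\sigma$ is a Delaunay simplex of a perturbed $\epsilon$-separated $\epsilon$-net; the sliver-free condition together with the separation and covering bounds of the net force the fatness of $\tilde\sigma$ (ratio of inradius to circumradius) to be bounded below by a constant depending only on $n$. Transferring this bound back to the hyperbolic metric yields an $L(n,\mu)$-bilipschitz equivalence between $\sigma$ and the standard Euclidean $n$-simplex.

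The main obstacle is the sliver-elimination step for $n\geq 3$: the Delaunay triangulation of an arbitrary $\epsilon$-separated $\epsilon$-net in $\mathbb{R}^n$ can contain simplices whose vertices are almost coplanar, producing simplices arbitrarily close to degenerate. The technical heart of the argument is constructing an equivariant perturbation that simultaneously destroys all potential slivers throughout $\mathbb{H}^n$, while staying small enough to preserve both the $\epsilon$-separation of the net and the thick/thin classification of each resulting simplex.
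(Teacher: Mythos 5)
Your high-level plan---take a maximal $\epsilon$-separated net, form the Delaunay triangulation, and perturb vertices to kill slivers---is exactly the strategy of the paper, and you correctly identify sliver elimination as the crux. But your proposed mechanism for it does not work: you say the perturbation is ``obtained by a genericity argument \dots\ so that no $n+2$ are cospherical and no $n+1$ are nearly coplanar.'' Genericity (avoiding a measure-zero set) gives you the former and the statement that no $n+1$ points are \emph{exactly} coplanar, but it gives you no quantitative lower bound on how far from coplanar they are. An arbitrarily generic perturbation of size $\epsilon/100$ can still leave some $(n+1)$-tuple within distance $10^{-100}\epsilon$ of a hyperplane, and then the resulting Delaunay simplex has dihedral angle $\approx 10^{-100}$, ruining the bilipschitz constant. ``Generic'' and ``uniformly thick'' are different conditions, and conflating them is precisely the gap.

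What the paper does instead is a \emph{quantitative} pigeonhole argument, carried out by induction on dimension. Lemmas~2--3 show that if a $k$-simplex is bad (small altitude) but all its proper faces are good, then the offending vertex must lie in a thin shell around the circumsphere of the opposite face; Lemma~4 bounds the volume of that ``bad region,'' and makes it go to $0$ as the altitude bound $d \to 0$; Lemma~5 bounds the number of simplices a given vertex can participate in across \emph{all} small perturbations, using a packing argument. Choosing $d$ small enough that the total volume of bad regions is less than the volume of the perturbation ball guarantees an admissible new position for each vertex. This is an effective, constructive choice, not a Baire-category or measure-zero genericity argument, and it is the entire technical content of the theorem. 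Two secondary issues: your fundamental-domain / equivariance framing implicitly assumes finite volume (the paper handles infinite volume by exhaustion over balls $B(x_0,m)$), and you never actually invoke the inductive structure --- for $n\ge 4$ one must first make the lower-dimensional faces good before the volume estimate for the top-dimensional bad region is available.
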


Existence of thick geodesic triangulations implies that any hyperbolic n-manifold $M$ has a geodesic triangulation such that the simplices contained in the thick part of $M$ come from a fixed compact set of simplices which does not depend on the manifold.  In \cite{Breslin}, we use this compactness to prove existence of bounds on the curvatures of surfaces embedded in hyperbolic 3-manifolds.  In particular, it is shown that there exists a fixed constant $\omega > 0$ such that if $S$ is an incompressible surface or a strongly irreducible Heegaard surface in a complete orientable hyperbolic 3-manifold, then $S$ is isotopic to a surface whose principal curvatures are bounded in absolute value by $\omega$.  The constant $\omega$ depends on neither the hyperbolic $3$-manifold nor the surface.  Another interesting application of thick triangulations can be found in \cite{kapovich2}, where Kapovich explains how to use thick triangulations of hyperbolic $n$-manifolds to obtain an inequality between the relative homological dimension of a Kleinian group $\Gamma \subset$ Isom($\h^n$) and its critical exponent.

To prove existence of thick geodesic triangulations, we examine Delaunay triangulations of well-spaced point sets in hyperbolic n-space and the problem of eliminating flat simplices (i.e. simplices with small dihedral angles).  The corresponding question for Euclidean space has been well-studied.  The only tetrahedra in such a triangulation which can have small dihedral angles are called \textit{slivers}, and it was a problem to show how to remove them without creating new ones.  Several techniques for removing slivers have been developed in the Euclidean setting (see \cite{Edels},\cite{Miller},\cite{Li}, \cite{Li2}).  We adapt the technique introduced in \cite{Edels} of perturbing vertices of a Delaunay triangulation in order to remove slivers to the hyperbolic setting.

Emil Saucan has shown that hyperbolic $n$-orbifolds have triangulations whose simplices are uniformly round (called ``fat" triangulations), and he uses this to prove existence of quasi-meromorphic maps which are automorphic with respect to the corresponding Kleinian group (See \cite{saucan1}, \cite{saucan2}, \cite{saucan3}).\\

\noindent Let $M$ be a complete hyperbolic $n$-manifold.\\

\noindent\textbf{Definition} (\textit{thick-thin decomposition}).   Let $\mu > 0$. The $\mu$-\textit{thick part} of $M$, denoted by $M_{[\mu ,\infty )}$ is the set of points where the injectivity radius is at least $\mu / 2$.  The $\mu$-\textit{thin part} of $M$, denoted by $M_{(0, \mu ]}$, is the closure of the complement of $M_{[\mu, \infty)}$.\\

\noindent\textbf{Definition} (\textit{thick triangulation}).  Let $\mu > 0$, $L > 0$. A triangulation $T$ of a complete hyperbolic $n$-manifold $M$ is $(\mu, L)$-\textit{thick} if every $n$-simplex of $T$ which is contained in the $\mu$-thick part of $M$ is $L$-bilipschitz diffeomorphic to the standard Euclidean $n$-simplex.  Once we have fixed $\mu$ and $L$, we will refer a \textit{thick} triangulation.\\

\noindent\textbf{Definition} (\textit{Delaunay triangulation}).  Let $\mathcal{S}$ be a generic set of points in $M$ such that for any $p$ $\in$ $M$ the ball $B(p,inj(M,p)/5)$ centered at $p$ with radius $inj(M,p)/5$ contains a point of $\mathcal{S}$ in its interior.  The \textit{Delaunay triangulation of $\mathcal{S}$} is the geodesic triangulation of $M$ determined as follows:  A set, $\{ p_0, ... , p_{n} \}$, of $n+1$ vertices in $\mathcal{S}$ determines an $n$-simplex in $\mathcal{T}$ if and only if the minimal radius circumscribing sphere contains no points of $\mathcal{S}$ in its interior.\\

See \cite{Leibon} for existence of Delaunay triangulations in Riemannian manifolds.

We want to find triangulations such that the simplices in the  thick part of $M$ are neither too big nor too small, and which do not have small dihedral angles.  It is not difficult to find triangulations such that the simplices in the thick part are neither too big nor too small.  Let $\mu > 0$ be a Margulis constant for $\mathbb{H}^n$.  Let $\epsilon := \mu /100$.  Let $\mathcal{S}$ be a generic set of points in $M$ such that for any $p$ $\in$ $M$ the ball $B(p,inj(M,p)/5)$ centered at $p$ with radius $inj(M,p)/5$ contains a point of $\mathcal{S}$ in its interior.  Also assume that the set $\mathcal{S}$ is maximal with respect to the condition that each point in $\mathcal{S} \cap M_{[\mu ,\infty)}$ is no closer than $\epsilon$ to another point of $\mathcal{S}$.  Let $T$ be the Delaunay triangulation of $\mathcal{S}$.  Any simplex of $T$ in the $\mu$-thick part of $M$ has edge lengths in the interval $[\epsilon ,2\epsilon]$.  In fact, this triangulation is not very far from the one we want.  We will show that each vertex of $\mathcal{S} \cap M_{[\mu,\infty)}$ can be moved a small distance so that the Delaunay triangulation of the new set of points is $(\mu, L)$-thick.\\

\noindent\textbf{Definition} (\textit{altitude}).  The \textit{altitude} of a vertex $v$ of a geodesic $n$-simplex in $\h^n$ is the distance from $v$ to the hyperplane of $\h^n$ containing the other vertices.\\

\noindent\textbf{Definition} (\textit{good simplices}). For $2 \le k \le n$, $0 < a < b$, and $0 < d$, a geodesic $k$-dimensional simplex $S$ in hyperbolic $n$-space $\mathbb{H}^n$ is \textit{(a,b,d)-good} if the lengths of the edges of $S$ are contained in the interval $[a,b]$ and the altitude of each vertex of $S$ is at least $d$.  When $a,b,d$ are understood, we will refer to \textit{good simplices}.  We say $S$ is \textit{bad} if it is not good.\\


\noindent\textit{Remark 1.}  If a geodesic $n$-simplex $S$ in $\h^n$ has edge lengths in $[a,b]$, then there are two ways that it can be $(a,b,d)$-bad for a small number $d > 0$.  Either $S$ has big circumradius or the vertices of $S$ all lie near a hyperbolic $(n-2)$-sphere.  In the triangulation $T$ described above the simplices in $T \cap M_{[\mu ,\infty)}$ have bounded circumradii, so that the vertices of any $(a,b,d)$-bad simplex in $T \cap M_{\mu ,\infty)}$ must all lie close to a hyperbolic $(n-2)$-sphere. The vertices get closer to an $(n-2)$-sphere as $d\rightarrow 0$.\\


\noindent\textit{Remark 2.}  Let $b > a > 0$ and $d > 0$.  Consider the set of compact hyperbolic $n$-simplices in $\h^n$ up to isometry.  The set of geodesic $(a,b,d)$-good simplices is a compact subset.  Thus we have the following lemma.

\begin{lemma}\label{lem1}
For each $n \ge 2$, $b > a > 0$, and $d > 0$, there exists a constant $L:=L(n,a,b,d)$ such that each $(a,b,d)$-good simplex is $L$-bilipschitz diffeomorphic to the standard Euclidean $n$-simplex.
\end{lemma}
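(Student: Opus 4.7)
The plan is to run a compactness-plus-continuity argument, using the observation in Remark 2 and producing a canonical bilipschitz map that depends continuously on the simplex.

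First I would make precise the space of simplices. Fix a basepoint $p_0 \in \h^n$ and an orthonormal frame at $p_0$, so that $\mathrm{Isom}(\h^n)$ acts freely on based frames. Parametrize ordered hyperbolic $n$-simplices up to isometry by placing the $0$-th vertex at $p_0$ and recording the positions $(w_1,\dots,w_n) \in (\h^n)^n$ of the remaining vertices. The $(a,b,d)$-good condition cuts out a subset $\mathcal{G}_n(a,b,d)$: the inequalities $a \le d(w_i,w_j) \le b$ are closed, and the altitude-$\ge d$ conditions are closed. Since every $w_i$ is within distance $b$ of $p_0$, the set is also bounded, hence compact. The altitude condition guarantees uniform non-degeneracy, so every element of $\mathcal{G}_n(a,b,d)$ is a genuine $n$-simplex.

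Next I would construct a diffeomorphism $\phi_S\colon \Delta^n \to S$ from the standard Euclidean $n$-simplex to each good simplex $S$, depending continuously on $S$. The cleanest choice is to use the Klein (projective) model of $\h^n$, in which geodesic segments of $\h^n$ correspond to straight Euclidean segments in the unit ball $B \subset \R^n$. A hyperbolic simplex $S$ with vertices $w_0,\dots,w_n$ corresponds in the Klein model to the Euclidean simplex $S^{\mathrm{K}}$ with vertices $w_0^{\mathrm{K}},\dots,w_n^{\mathrm{K}}$. Let $A_S\colon \Delta^n \to S^{\mathrm{K}}$ be the unique affine map sending the standard vertices of $\Delta^n$ to the $w_i^{\mathrm{K}}$ in order, and define $\phi_S := \kappa \circ A_S$, where $\kappa\colon B \to \h^n$ is the Klein chart. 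Both factors are smooth, and both depend continuously (in the $C^1$ topology on maps between compact sets) on the data $(w_0,\dots,w_n)$, provided that $A_S$ stays uniformly non-degenerate and that $S^{\mathrm{K}}$ stays in a compact subset of $B$. The first holds because altitudes are bounded below by $d$; the second holds because all vertices lie at distance $\le b$ from $p_0$.

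Finally, I would note that the bilipschitz constant of $\phi_S$, namely
\[
L(S) = \max\Bigl(\sup_{x \in \Delta^n}\|D\phi_S(x)\|,\ \sup_{y \in S}\|D\phi_S^{-1}(y)\|\Bigr),
\]
is a continuous function of $S \in \mathcal{G}_n(a,b,d)$; this is where the uniform non-degeneracy provided by the altitude bound is crucial, as it keeps $\det D\phi_S$ bounded away from $0$. By compactness of $\mathcal{G}_n(a,b,d)$, the constant $L := \sup_S L(S)$ is finite and depends only on $n,a,b,d$. Since every $(a,b,d)$-good simplex is isometric to some element of $\mathcal{G}_n(a,b,d)$ (after ordering its vertices), this $L$ proves the lemma. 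The only real obstacle is ensuring that $L(S)$ does not blow up along a sequence in $\mathcal{G}_n(a,b,d)$; this is handled by the altitude lower bound, which rules out the only way $A_S$ can degenerate.
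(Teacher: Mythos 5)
Your proof is correct and follows the same compactness approach that the paper indicates in Remark~2 (the paper states the lemma without proof, treating it as an immediate consequence of the compactness of the space of good simplices up to isometry). Your Klein-model construction of a canonical bilipschitz map and the continuity-plus-compactness argument supply exactly the details the paper leaves implicit.
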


\noindent\textbf{Definition} (\textit{good perturbation}).  Let $\delta > 0$.  A $\delta$\textit{-good perturbation} of $\mathcal{S}$ is a collection of points $\mathcal{S'}$ in $M$ such that there exists a bijection $\phi : \mathcal{S} \rightarrow \mathcal{S'}$ with $d(p,\phi(p)) \le \delta$ for every $p \in \mathcal{S}$. Denote $\phi(p)$ by $p'$.  If $\mathcal{T}$ and $\mathcal{T'}$ are the Delaunay triangulations of $\mathcal{S}$ and $\mathcal{S'}$, then we will say that $\mathcal{T'}$ is a $\delta$\textit{-good perturbation} of $\mathcal{T}$.  When $\delta$ is understood, we will refer to a \textit{good} perturbation.\\

\noindent\textbf{Definition} (\textit{bad region}).  Let $S =[v_1, ... ,v_k]$ be a geodesic $(k-1)$-simplex in $\mathbb{H} ^n$. Let $b > a > 0$, $c > 0$, $d >0$.   The $(a,b,c,d)$-\textit{bad region} of $S$ is the set of points $p$ in $\mathbb{H} ^3$ such that $[p,v_1 , ... ,v_k]$ has edge lengths in $[a,b]$, circumradius at most $c$, and the distance from $p$ to the hyperplane containing the opposite face is less than $d$. \\

The next lemma shows if a bad simplex has good proper sub-simplices, then each vertex is close to the plane containing the other vertices.

\begin{lemma}\label{lem2}
Let $S = [v_0 , ... ,v_k]$ be a geodesic $k$-simplex in $\mathbb{H}^k$ with edge lengths in $[a,b]$.  If every proper subcomplex of $S$ is $(a,b,d_0 )$-good and $S$ is $(a,b,d)$-bad, then the distance from each vertex of $S$ to the hyperplane containing the opposite face is at most a constant $D := D(b,d_0 ,d)$ such that $D(b,d_0 ,d) \rightarrow 0$ as $d \rightarrow 0$ and $b$ and $d_0$ remain fixed.
\end{lemma}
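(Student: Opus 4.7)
My plan is a compactness/limit argument by contradiction. Fix $b$ and $d_0 > 0$, and suppose the conclusion fails: there exist some $D_0 > 0$ and a sequence of geodesic $k$-simplices $S_n = [v_0^{(n)}, \ldots, v_k^{(n)}]$ in $\h^k$, all with edges in $[a,b]$ and with every proper subcomplex $(a,b,d_0)$-good, yet satisfying $\min_i h_i(S_n) \to 0$ while $\max_j h_j(S_n) \ge D_0$. I want to extract a limiting configuration $S_\infty$ and derive a contradiction from its structure.

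Working modulo isometries of $\h^k$, each $S_n$ is determined by its $\binom{k+1}{2}$ edge lengths, all of which lie in the compact interval $[a,b]$; hence the moduli space is compact and a subsequence converges to some $S_\infty = [v_0^{\infty}, \ldots, v_k^{\infty}]$. Because $(a,b,d_0)$-goodness is a closed condition, every proper subcomplex of $S_\infty$ is still $(a,b,d_0)$-good, and in particular each opposite face $F_i^\infty$ spans a genuine $(k-1)$-hyperplane $H_i^\infty \subset \h^k$. Each altitude $h_i = d(v_i, H_i)$ is therefore continuous at $S_\infty$, giving $\min_i h_i(S_\infty) = 0$ and $\max_j h_j(S_\infty) \ge D_0$. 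Now let $i_0$ realize $h_{i_0}(S_\infty) = 0$: then $v_{i_0}^\infty \in H_{i_0}^\infty$, and since the remaining vertices already lie in $H_{i_0}^\infty$ by definition of $F_{i_0}^\infty$, all $k+1$ vertices of $S_\infty$ are confined to the single $(k-1)$-hyperplane $H_{i_0}^\infty$. For any other index $j$, the face $F_j^\infty$ then lies in $H_{i_0}^\infty$ and spans a $(k-1)$-hyperplane that must coincide with $H_{i_0}^\infty$; so $v_j^\infty \in H_{i_0}^\infty = H_j^\infty$ and $h_j(S_\infty) = 0$ for every $j$, contradicting $\max_j h_j(S_\infty) \ge D_0$.

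The step I expect to be most delicate is the continuity of the altitude functions at the limit $S_\infty$, which is itself degenerate (contained in a hyperplane) and so is not a bona fide $k$-simplex. The key observation is that continuity of $h_i(\cdot) = d(v_i, H_i)$ only requires the opposite face $F_i$ to stay non-degenerate as a $(k-1)$-simplex in $\h^k$, i.e.\ that $H_i$ remain a well-defined $(k-1)$-hyperplane; and this is precisely what the uniform $(a,b,d_0)$-goodness of proper subcomplexes guarantees along the whole sequence. Once that is in hand, everything else reduces to continuity of edge lengths and altitudes on a compact moduli space, and the final quantitative bound $D(b,d_0,d) \to 0$ as $d \to 0$ follows directly from the contradiction above.
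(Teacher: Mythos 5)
Your compactness argument is correct, but it is a genuinely different route from the one in the paper. The paper's proof of this lemma is a direct hyperbolic-trigonometric computation: it picks a vertex $v_0$ with altitude $<d$, introduces the hyperplanes $P_0$ through $v_0,\dots,v_{k-1}$ and $P_k$ through $v_1,\dots,v_k$, relates the dihedral angle $\alpha$ along $P_0\cap P_k$ to the altitude of $v_0$ via the hyperbolic law of sines, and then transfers $\alpha$ across the shared ridge to bound the altitude of $v_k$, arriving at the explicit constant $D(b,d_0,d)=\operatorname{arcsinh}\!\bigl[\tfrac{\sinh d}{\sinh d_0}\sinh b\bigr]$. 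Your argument instead establishes the limit $D\to 0$ non-constructively, by contradiction via convergence in a compact configuration space, using two soft facts: $(a,b,d_0)$-goodness of the proper subcomplexes is a closed condition that keeps the opposite hyperplanes $H_i$ well-defined in the limit (so the altitude functions are continuous up to the degenerate boundary), and a degenerate $k$-simplex in $\h^k$ whose $(k-1)$-faces are all non-degenerate must be confined to a single $(k-1)$-hyperplane, forcing every altitude to vanish simultaneously. The trade-offs are the usual ones. The paper's computation is entirely elementary and yields an explicit rate of decay for $D$, which propagates into explicit constants in Lemmas~3 and 4 and ultimately into $L(n,\mu)$; in fact the formula happens to be independent of $a$ and $k$, which your compactness argument a priori does not give (your $D$ would depend also on $a$ and on $k$ through the choice of moduli space, though this extra dependence is harmless for the application). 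Your argument, in exchange, makes the conceptual content transparent — namely that the lemma is a soft consequence of the non-degeneracy of the proper faces — and avoids any coordinate geometry, so it would carry over essentially verbatim to other model geometries. Both are valid; the paper simply opts for the effective version.
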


\begin{figure}[h]
\includegraphics[width=\textwidth]{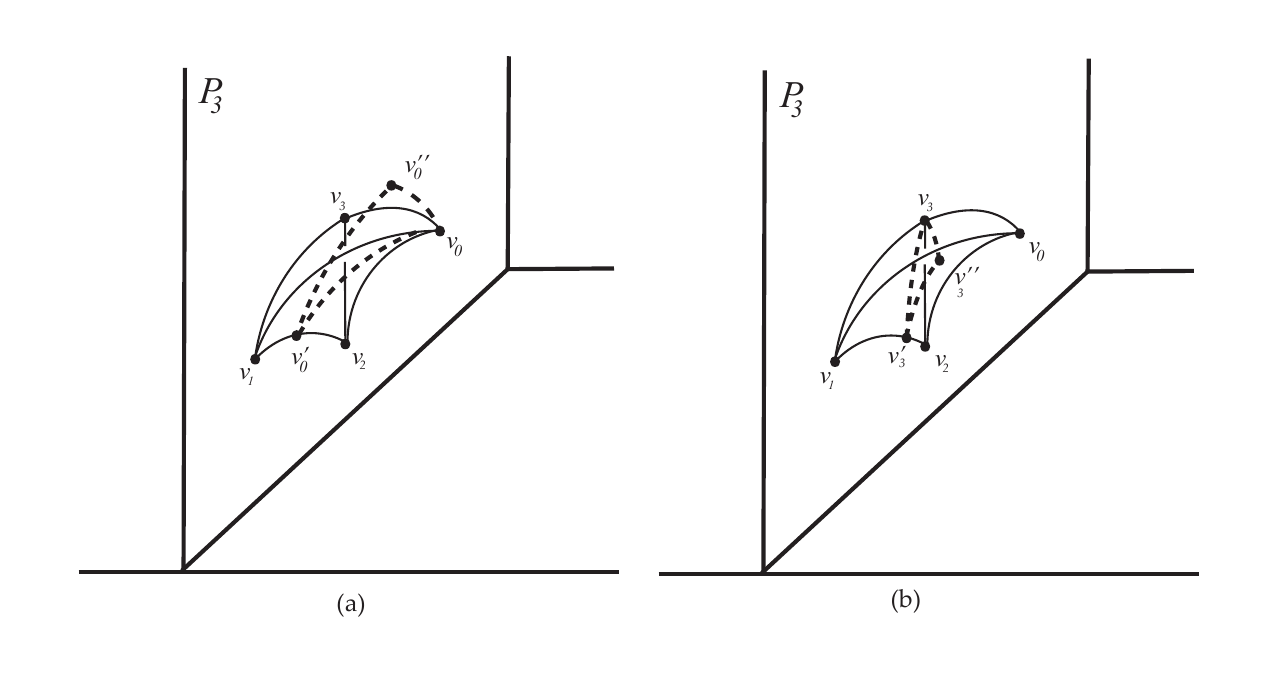}
\caption{If every proper subcomplex of $S$ is good and $S$ is bad, then each vertex is close to the plane containing the opposite face.}
\label{fig1}
\end{figure}

\begin{proof}
Since $S$ is $(a,b,d)$-bad and the edge lengths are in $[a,b]$, the distance from some vertex, say $v_0$, to the hyperplane in $\mathbb{H}^k$ containing the opposite face $[v_1 , ... ,v_k ]$ is less than $d$.  Let $P_0$ be the hyperplane containing $v_0 , ... ,v_{k-1}$.  Let $P_k$ be the hyperplane containing $v_1 , ... ,v_k$.   Let $\alpha$ be the angle between $P_0$ and $P_k$.  Let $v_0 '$ be the orthogonal projection of $v_0$ to $P_0 \cap P_k$ and let $v_0 ''$ be the orthogonal projection of $v_0$ to $P_k$.  The angle of the hyperbolic triangle $[v_0 ,v_0 ' ,v_0 '']$ at $v_0 '$ is $\alpha$.  See Figure \ref{fig1}(a).  The hyperbolic law of sines \cite{Fenchel} gives us

\begin{center}
$\sin(\alpha) = \frac{\sinh(||[v_0 ,v_0 '']||)}{\sinh(||[v_0 ,v_0 ']||)}$.
\end{center}

Let $v_k '$ be the orthogonal projection of $v_k$ to $P_0 \cap P_k$ and let $v_k ''$ be the orthogonal projection of $v_k$ to $P_0$.  The angle the hyperbolic triangle $[v_k, v_k ' ,v_k '']$ at $v_k ''$ is also $\alpha$.  See Figure \ref{fig1}(b). Using the hyperbolic law of sines again we get

\begin{align*}
\sinh(||[v_k ,v_k '']||) &= \sin(\alpha)\cdot\sinh(||[v_k ,v_k ']||)\\
                          &= \frac{\sinh(||[v_0 ,v_0 ''])}{\sinh(||[v_0 ,v_0 ']||)} \cdot \sinh(||[v_k ,v_k ']||).
\end{align*}

Now $||[v_0 ,v_0 ']|| \ge d_0$ and $||[v_k ,v_k ']|| \le b$ since $||[v_0 , ... ,v_{k-1}]||$ and  $||[v_1 , ... ,v_k]||$ are $(a,b,d_0)$-good $(k-1)$-simplices.   Also, $||[v_0 ,v_0 '']|| < d$ by our assumption.    Thus we have

\begin{center}
$\sinh(||[v_k ,v_k '']||) \le \frac{\sinh(d)}{\sinh(d_0)} \cdot \sinh(b)$.
\end{center}

We have shown that the distance from $v_k$ to the hyperplane containing $[v_0 , ... ,v_{k-1}]$ is at most

\begin{center}
$D(b,d_0 ,d) := \operatorname{arcsinh}[\frac{\sinh(d)}{\sinh(d_0)} \cdot \sinh(b)]$.
\end{center}

A similar argument shows the distance from each vertex to the hyperplane containing the opposite face is at most

\begin{center}
$D(b,d_0 ,d)$.
\end{center}

\end{proof}

Next we show that if a bad $k$-simplex has bounded circumradius and good proper simplices, then the vertices lie near a hyperbolic $(k-2)$-sphere.

\begin{lemma}\label{lem3}
Let $S = [v_0 , ... ,v_k]$ be a geodesic $k$-simplex in $\mathbb{H}^k$ with edge lengths in $[a,b]$ and circumradius at most $c$.  If every proper subcomplex of $S$ is $(a,b,d_0)$-good and $S$ is $(a,b,d)$-bad, then the distance from each vertex to the circumsphere of the opposite face is at most a constant $R := R(a,b,c,d_0,d)$ such that $R \rightarrow 0$ as $d \rightarrow 0$ and $a,b,c,d_0$ remain fixed.
\end{lemma}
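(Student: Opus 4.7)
The plan is to reduce to Lemma 2 via a hyperbolic Pythagorean calculation. The key geometric observation is that the circumsphere $C_i$ of the face $F_i$ of $S$ opposite to $v_i$ coincides with the intersection $\Sigma \cap H_i$, where $\Sigma$ is the circumsphere of $S$ and $H_i$ is the hyperplane spanned by $F_i$. Both are $(k-2)$-spheres in $H_i$ passing through the $k$ vertices of $F_i$, so by uniqueness of the circumsphere of a nondegenerate $(k-1)$-simplex they agree. In particular, letting $O$ be the center of $\Sigma$, $r \le c$ its radius, $p$ the foot of the perpendicular from $O$ to $H_i$, and $h := d(O, p)$, the hyperbolic Pythagorean theorem identifies $C_i$ with the hyperbolic sphere in $H_i$ centered at $p$ with radius $r'$ satisfying $\cosh(r') = \cosh(r)/\cosh(h)$.

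By Lemma 2, $d(v_i, H_i) \le D := D(b, d_0, d)$ with $D \to 0$ as $d \to 0$. Let $v_i^*$ be the orthogonal projection of $v_i$ onto $H_i$; then $d(v_i, v_i^*) \le D$, and the triangle inequality gives $|d(O, v_i^*) - r| \le D$. Applying the hyperbolic Pythagorean theorem to the right triangle at $p$ with vertices $O, p, v_i^*$ yields $\cosh(d(p, v_i^*)) = \cosh(d(O, v_i^*))/\cosh(h)$; subtracting this from the analogous identity for $r'$,
\begin{equation*}
|\cosh(d(p, v_i^*)) - \cosh(r')| \;\le\; |\cosh(d(O, v_i^*)) - \cosh(r)| \;\le\; 2\sinh(c + D)\sinh(D).
\end{equation*}
Two vertices of $F_i$ on $C_i$ lie at hyperbolic distance at least $a$, so the hyperbolic diameter $2r'$ of $C_i$ is at least $a$, and hence $r' \ge a/2$; also $d(p, v_i^*) \le d(O, v_i^*) \le c + D$. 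Since $\cosh$ is bilipschitz on any bounded interval bounded away from $0$, the displayed estimate converts to $|d(p, v_i^*) - r'| \le R'$ for some $R' = R'(a, b, c, d_0, d)$ with $R' \to 0$ as $d \to 0$.

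Inside $H_i$ the distance from $v_i^*$ to $C_i$ equals $|d(p, v_i^*) - r'|$, so
\begin{equation*}
d(v_i, C_i) \;\le\; d(v_i, v_i^*) + d(v_i^*, C_i) \;\le\; D + R' \;=:\; R(a, b, c, d_0, d),
\end{equation*}
with $R \to 0$ as $d \to 0$, as required. The main obstacle is verifying cleanly that a hyperbolic sphere intersected with a hyperplane is itself a hyperbolic sphere obeying the stated Pythagorean relation between radii; once this is established, the lower bound $r' \ge a/2$ keeps us uniformly away from the noninvertible region of $\cosh$ near $0$, and the remainder of the estimate is routine.
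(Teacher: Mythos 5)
Your proof is correct in substance and takes a genuinely different route from the paper's. Both proofs hinge on the same key geometric fact — the circumsphere $A$ of the opposite face equals $\Sigma \cap H_i$, so the projection $v_i^*$ of $v_i$ into $H_i$ sits near $A$ — but the way you quantify ``near'' differs. The paper works in the $2$-plane $Q$ through $v_i$, $v_i^*$, and the nearest point of $A$, bounds from below the angle $\alpha_0(a,c)$ at which the circle $\Sigma\cap Q$ meets the line $H_i\cap Q$ (using the radius bounds $a/2\le r'\le r\le c$), and then applies the hyperbolic law of sines in the small triangle. You instead exploit the hyperbolic Pythagorean relation $\cosh r' = \cosh r / \cosh h$ for the right triangle $O,p,x$ with $x\in A$, compare it to the analogous relation $\cosh d(p,v_i^*) = \cosh d(O,v_i^*)/\cosh h$, and conclude $d(p,v_i^*)\approx r'$ by inverting $\cosh$. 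The Pythagorean route is arguably more direct: it avoids introducing and estimating the angle $\alpha_0$ and replaces the law-of-sines triangle computation with a one-line identity.

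One step deserves tightening. To invert $\cosh$ you invoke bilipschitzness ``on any bounded interval bounded away from $0$,'' and you establish $r'\ge a/2$, but you never rule out $d(p,v_i^*)$ being close to $0$, and $\cosh^{-1}$ is not Lipschitz near $1$. The fix is quick: if $d(p,v_i^*)< a/4$ then $\cosh(r')-\cosh(d(p,v_i^*))\ge \cosh(a/2)-\cosh(a/4)>0$, contradicting your displayed bound $2\sinh(c+D)\sinh(D)$ once $d$ is small (recall $D\to 0$ as $d\to 0$). Hence for $d$ below a threshold depending on $a,b,c,d_0$ you have $d(p,v_i^*)\ge a/4$, both arguments of $\cosh$ lie in $[a/4,\,c+D]$, and the bilipschitz step goes through; for $d$ above the threshold the trivial bound $R:=2(c+D)$ suffices since all distances involved are bounded. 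With that patch your argument is complete and yields a cleaner constant than the paper's.
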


\begin{figure}[h]
\includegraphics[width=\textwidth]{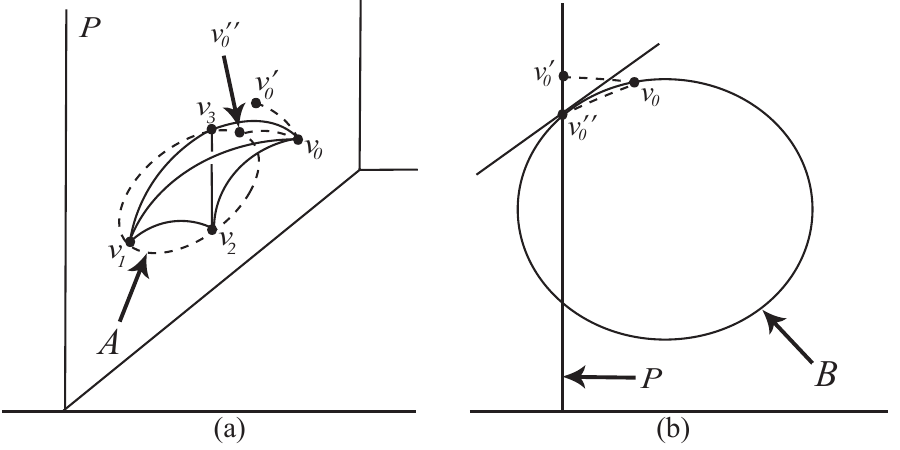}
\caption{The distance from $v_0$ to the circumsphere $A$ of $[v_1 ,...,v_k]$ is small if every proper subcomplex of $S$ is good, but $S$ is bad}
\label{fig2}
\end{figure}

\begin{proof}

Since every proper subcomplex of $S$ is $(a,b,d_0)$-good and $S$ is $(a,b,d)$-bad and has edge lengths in $[a,b]$ and circumradius at most $c$, the distance from each vertex to the hyperplane containing the opposite face is at most the constant $D := D(b,d_0 ,d))$ provided by Lemma \ref{lem2}.

Let $A$ be the circumsphere of $[v_1 , ... ,v_k]$.  Let $B$ be the circumsphere of $S$.  Now $A$ is the intersection of $B$ with some hyperplane $P$.  Let $v_0 '$ be the orthogonal projection of $v_0$ to $P$.  Let $v_0 ''$ be the  point on $A$ which is closest to $v_0$.  Let $Q$ be the $2$-dimensional hyperbolic plane which contains $v_0$, $v_0 '$, and $v_0 ''$.    Now $B \cap Q$ is a hyperbolic circle which intersects the hyperbolic line $P \cap Q$ (see Figure \ref{fig2}).  Since the radii of $A$ and $B$ are in the interval $[a/2 , c]$, the angle between $P \cap Q$ and the tangent of $B \cap Q$ at the two points in $(P \cap Q) \cap (B \cap Q)$ is bounded from below by a positive constant $\alpha_0 := \alpha_0 (a,c)$.  Thus the angle $\alpha$ between $[v_0 '',v_0]$ and $[v_0 '',v_0 ']$ is bounded from below by $\alpha_0$.  Let $\beta$ be the angle between $[v_0 ,v_0 ']$ and $[v_0 ,v_0 '']$.  By the hyperbolic law of sines we have

\begin{align*}
\sinh(||[v_0 ' ,v_0 '']||) &= \frac{\sinh(||[v_0 ,v_0 ']||)}{\sin(\alpha)} \cdot \sin(\beta)\\
                           &\le \frac{\sinh(D)}{\sin(\alpha_0)}.
\end{align*}


\noindent The triangle inequality now gives us

\begin{align*}
||[v_0 ,v_0 '']|| &\le ||[v_0 ,v_0 ']|| + ||[v_0 ' ,v_0 '']|| \\
                   &\le D(b,d_0 ,d) + \operatorname{arcsinh}(\frac{\sinh(D(b,d_0 ,d))}{\sin(\alpha_0 (a,c))}).
\end{align*}

\noindent Let $R(a,b,c,d_0 ,d) := D(b,d_0 ,d) + \operatorname{arcsinh}(\frac{\sinh(D(b,d_0 ,d))}{\sin(\alpha_0 (a,c))})$.

\end{proof}

We can now bound the volume of the bad region of a simplex with bounded circumradius and good proper sub-simplices.

\begin{lemma}\label{lem4}
Let $b > a > 0,c > 0,d_0 > 0,d > 0$  Let $n \ge 3$ and $k < n$.  Let $S$ be a geodesic $k$-simplex in $\mathbb{H}^n$ such that the circumradius of $S$ is at most $c$ and  every proper sub-simplex of $S$ is $(a,b,d_0)$-good.  The volume of the $(a,b,c,d)$-bad region of $S$ is at most a constant $V_k :=V_k(n,a,b,c,d_0 ,d)$ such that $V_k \rightarrow 0$ as $d \rightarrow 0$ and $a,b,c,d_0$ remain fixed.
\end{lemma}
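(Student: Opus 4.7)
The plan is to show that the bad region of $S$ is contained in a thin bounded tubular neighborhood of the totally geodesic $k$-flat spanned by $S$, and then to estimate its hyperbolic volume using Fermi coordinates. The hypotheses on the circumradius of $S$ and the goodness of its proper sub-simplices play no role in the crude bound produced here; they are retained in the statement so that the constant $V_k$ carries the parameter list demanded by later arguments.

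First I would localize the bad region. Let $P$ denote the totally geodesic $k$-flat in $\mathbb{H}^n$ containing $S$, and fix a vertex $v_0$ of $S$. By definition, every $p$ in the $(a,b,c,d)$-bad region satisfies $\mathrm{dist}(p,P) < d$, and the edge-length requirement $d(p,v_0) \le b$ forces $p \in B(v_0,b)$. Hence the bad region lies inside
\[
\Omega_d \;:=\; N_d(P) \cap B(v_0,b),
\]
where $N_d(P)$ is the open tubular $d$-neighborhood of $P$ in $\mathbb{H}^n$.

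Next I would estimate the volume of $\Omega_d$ in Fermi coordinates about $P$. Since $P$ is totally geodesic in a space of constant curvature $-1$, the Jacobi equation along the normal geodesics gives the warped metric
\[
ds^2 \;=\; \cosh^2(r)\, g_P \;+\; dr^2 \;+\; \sinh^2(r)\, g_{S^{n-k-1}},
\]
where $r$ is orthogonal distance to $P$, with associated volume form $\cosh^k(r)\sinh^{n-k-1}(r)\,dr\,d\mathrm{vol}_{S^{n-k-1}}\,d\mathrm{vol}_P$. The projection $\pi_P(\Omega_d)$ lies in the intrinsic ball $B_P(v_0,b)$, whose volume is bounded by a constant $C_1(n,b)$, and over each base point in that projection the normal fiber contributes at most $\omega_{n-k-1}\int_0^d \cosh^k(r)\sinh^{n-k-1}(r)\,dr$, which is an increasing continuous function of $d$ vanishing at $d=0$. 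Multiplying,
\[
\mathrm{Vol}(\text{bad region}) \;\le\; C_1(n,b)\,\omega_{n-k-1}\!\int_0^d \cosh^k(r)\sinh^{n-k-1}(r)\,dr \;=:\; V_k(n,a,b,c,d_0,d).
\]

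Since $k < n$ gives $n-k \ge 1$, the right-hand side tends to $0$ as $d \to 0$ with $a,b,c,d_0$ fixed, which is the desired conclusion. No serious obstacle arises; the only technical step is the Fermi-coordinate volume-form identity, a routine Jacobi-field computation for a totally geodesic submanifold of a constant-curvature space.
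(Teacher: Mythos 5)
Your proof is correct, but it takes a genuinely different and in fact more elementary route than the paper's. The paper's argument invokes Lemma~3: since every proper sub-simplex of $[p,S]$ is $(a,b,d_0)$-good while $[p,S]$ is $(a,b,d)$-bad with bounded circumradius, the new vertex $p$ must lie within distance $R(a,b,c,d_0,d)$ of the circumsphere of $S$, a $(k-1)$-dimensional hyperbolic sphere of radius at most $c$; the volume of this tubular neighborhood tends to $0$ with $R$, hence with $d$. You instead read off directly from the definition of the bad region that $p$ lies within distance $d$ of the $k$-flat $P$ spanned by $S$ and within distance $b$ of any fixed vertex, and then estimate the volume of a thin tube around a totally geodesic $k$-flat via Fermi coordinates. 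This is cleaner and, as you correctly observe, uses neither the circumradius bound nor the goodness of proper sub-simplices; those hypotheses (and Lemma~3) are essentially doing no work here and are retained in the statement only to carry the parameter list. The trade-off is quantitative: the paper localizes the bad region near a codimension-$(n-k+1)$ object (the circumsphere), giving volume decay of order $R^{\,n-k+1}$, whereas your tube around the $k$-flat has codimension $n-k$ and decays like $d^{\,n-k}$. Since the lemma only asks that $V_k \to 0$, this difference is irrelevant to the present paper, but it is worth noting if one ever needed the sharper rate. One small correction: the projection $\pi_P(\Omega_d)$ lies in $B_P(v_0,b+d)$, not $B_P(v_0,b)$; this does not affect the bound.
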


\begin{proof}
The $(a,b,c,d)$-bad region of $S$ is contained in the $R$-neighborhood of the circumsphere $B$ of $S$, where $R := R(a,b,c,d_0,d)$ is the constant provided by Lemma \ref{lem3}.  Since the radius of $S$ is at most $c$ and $R(a,b,c,d_0,d) \rightarrow 0$ as $d \rightarrow 0$, we can let $V_k$ be the volume of the $R$-neighborhood of a $(k-1)$-dimensional hyperbolic sphere of radius $c$ in $\mathbb{H}^n$.
\end{proof}

Let $\delta = \epsilon/10$.  Given a point $p$ in the set $\mathcal{S} \cap M_{[\mu , \infty )}$, the following Lemma bounds the number of $k$-tuples of points which might form a $k$-simplex with $p'$ in a $\delta$-good perturbation $T'$ of the triangulation $T$.

\begin{lemma}\label{lem5}
Let $p \in \mathcal{S} \cap M_{[\mu ,\infty)}$.  For each $k = 3, ... ,n$, the number of $k$-tuples $\{v_1 , ... ,v_k \}$ such that $[p',v_1 ', ... ,v_k ']$ is a $k$-simplex in some $\delta$-good perturbation of $T$ is bounded by a constant $N := N(n,k,\mu)$.
\end{lemma}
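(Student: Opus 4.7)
The plan is to show that every vertex $v_i$ that can appear in such a $k$-tuple lies in a bounded ball around $p$, and then to count using a packing argument in $\mathbb{H}^n$.

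First I would invoke the edge-length and circumradius estimates recorded just before the lemma: in any $\delta$-good perturbation, the perturbed simplex $[p', v_1', \ldots, v_k']$ has circumradius at most $\epsilon + \delta$ and edge lengths at least $\epsilon - 2\delta$. Hence $d(p', v_i') \le 2(\epsilon + \delta)$ and $d(v_i', v_j') \ge \epsilon - 2\delta$. Unperturbing by at most $\delta$ at each vertex gives
\[
d(p, v_i) \le 2\epsilon + 4\delta, \qquad d(v_i, v_j) \ge \epsilon - 4\delta \quad (i \ne j),
\]
while maximality of $\mathcal{S}$ already ensures $d(p, v_i) \ge \epsilon$.

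Second, because $p \in \mathcal{S} \cap M_{[\mu, \infty)}$ has injectivity radius at least $\mu/2$, and $2\epsilon + 4\delta$ is small compared to $\mu/2$, the ball $B(p, 2\epsilon + 4\delta)$ embeds isometrically in $M$ and lifts to a ball in $\mathbb{H}^n$. Inside this lifted ball the points $v_1, \ldots, v_k$ form an $(\epsilon - 4\delta)$-separated set, so by a standard volume-comparison argument in $\mathbb{H}^n$ (disjoint open balls of radius $(\epsilon - 4\delta)/2$ about each $v_i$ fit inside a slightly larger concentric ball) their number is bounded by a constant $N_0 = N_0(n, \mu)$. The number of admissible $k$-tuples is therefore at most $\binom{N_0}{k}$, which is the required constant $N(n, k, \mu)$.

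The main (minor) subtlety will be justifying that the edge-length and circumradius estimates apply uniformly across every $\delta$-good perturbation, since a priori $p'$ need not itself lie in $M_{[\mu, \infty)}$; but because $p$ has injectivity radius at least $\mu/2$ and the perturbed simplex has diameter at most $2(\epsilon+\delta) \ll \mu/2$, the whole simplex lies comfortably in the $\epsilon$-thick part and the relevant maximality argument extends after a negligible adjustment of constants.
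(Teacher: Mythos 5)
Your argument is correct and is essentially the paper's own proof: bound $d(p,v_i)$ above via the edge-length/circumradius estimate, bound the mutual separation of the $v_i$ below via the maximality of $\mathcal{S}$ (adjusted by $\delta$), pack disjoint small balls into a larger concentric ball to bound the number of candidate vertices by a constant $m(n,\mu)$, and conclude with $\binom{m}{k}$. The only differences are cosmetic choices of radii (the paper uses disjoint balls of radius $\tfrac{\epsilon}{2}-\delta$ inside a $(2\epsilon+2\delta)$-ball, while you use $(\epsilon-4\delta)/2$ inside a $(2\epsilon+4\delta)$-ball), and the paper does not explicitly remark, as you do, that the simplex must sit in the thick part for the edge-length estimates to apply.
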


\begin{proof}
The $(\frac{\epsilon}{2} -\delta)$-balls centered at the points of $\mathcal{S} \cap M_{[\mu ,\infty)}$ are mutually disjoint since no two points of $\mathcal{S}  \cap M_{[\mu ,\infty)}$ are closer than $\epsilon - 2\delta$ to each other.  If $p'$ and $q'$ are vertices of a $k$-simplex in a $\delta$-good perturbation $\mathcal{T'}$ of $\mathcal{T}$, then $d(p',q') \le 2\epsilon + 2\delta$, so that the $(\frac{\epsilon}{2} -\delta)$-ball centered at $q'$ is contained in the $(2\epsilon + 2\delta)$-ball centered at $p'$.  There can be at most

\begin{center}
$m := m(n,\mu ) = [\frac{\operatorname{Vol}_{\mathbb{H}^n} (B(2\epsilon + 2\delta))}{\operatorname{Vol}_{\mathbb{H}^n} (B(\frac{\epsilon}{2} -\delta))}]$
\end{center}

\noindent mutually disjoint $(\frac{\epsilon}{2} -\delta)$-balls contained in a $(2\epsilon + 2\delta)$-ball, where $[w]$ is the integer part of $w$.  One of these is centered at $p'$.  So there are at most $m - 1$ vertices in $\mathcal{S}$ which may be the vertex of a $k$-simplex in $\mathcal{T'}$ which also has $p'$ as a vertex.  Thus the number of $k$-tuples $\{ v_1 , ... ,v_k \}$ of points in $\mathcal{S}$ such that $[p',v_1 ' , ... ,v_k ']$ is a $k$-simplex in some good perturbation of $\mathcal{T}$ is at most $m\choose{k}$.  Let $N(n,k,\mu ) := {m(n,\mu) \choose{k}}$.
\end{proof}

\begin{lemma}\label{lem6}
A geodesic triangle in $\mathbb{H} ^2$ with edge lengths in $[a,b]$ and circumradius at most $R$ has altitudes bounded from below by a positive constant $h_0 := h_0 (a,b,R)$.
\end{lemma}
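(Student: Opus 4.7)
My plan is a compactness argument. Parametrize isometry classes of hyperbolic triangles by the triple of side lengths $(s_1,s_2,s_3)$, and let $\Delta \subset [a,b]^3$ be the open subset cut out by the strict triangle inequalities. Points of $\Delta$ correspond to non-degenerate hyperbolic triangles with edges in $[a,b]$, and on $\Delta$ the circumradius $\rho$ and each altitude $h_i$ are continuous, strictly positive functions.

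The next step is to show that the restricted set $K := \{T \in \Delta : \rho(T) \le R\}$ is a compact subset of $[a,b]^3$. The key point is that $\rho \to \infty$ as $T$ approaches the degenerate boundary of $\Delta$: at a degenerate triangle the three vertices are collinear, so the perpendicular bisectors of two of the sides become parallel (or ultraparallel) in $\h^2$, and their intersection (the circumcenter) escapes to infinity. Consequently a convergent sequence in $K$ cannot limit onto a degenerate triangle, so its limit lies in $\Delta$ and, by continuity of $\rho$ there, still satisfies $\rho \le R$. Thus $K$ is closed in the compact box $[a,b]^3$, hence compact.

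Each altitude $h_i$ is continuous and strictly positive on $K$, so attains a positive minimum by compactness; the minimum over $i = 1,2,3$ and over $K$ is the desired constant $h_0 = h_0(a,b,R) > 0$. The only real obstacle is the claim that $\rho$ blows up at the degenerate boundary, which I have addressed above via the bisector argument; alternatively one can invoke the hyperbolic law of sines, which via a formula relating $\sinh(s_i)$, $\sin(A_i)$, and $\sinh(\rho)$ forces $\rho \to \infty$ whenever an angle approaches $0$ or $\pi$. Either route then immediately yields the lower bound $\sinh(h_i) = \sinh(s_j)\sin(A_k) \ge \sinh(a)\sin(\alpha_0)$ for an explicit angle lower bound $\alpha_0 = \alpha_0(a,b,R)$, giving a completely effective form of $h_0$ if one wants it.
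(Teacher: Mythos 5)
Your compactness argument is a genuinely different route from the paper's. The paper proves this lemma by explicit hyperbolic trigonometry: it reduces to the case of an isosceles triangle of fixed circumradius, then chains the hyperbolic laws of sines and cosines to produce a closed-form lower bound $h_0(a,b,R) = \operatorname{arcsinh}\bigl[\tfrac{\sinh a}{\sinh b}\sinh h_1(a,R)\bigr]$. Your approach trades that computation for a softer existence statement, which is quite in the spirit of the paper's own Remark~2 (where the compactness of the space of $(a,b,d)$-good simplices up to isometry is invoked to deduce Lemma~1). What the paper's proof buys is an explicit, effective constant; what yours buys is brevity and the freedom from delicate case analysis about which vertex projects into the opposite edge.

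There is, however, a gap you should repair before the argument is airtight. You write that on $\Delta$ the circumradius $\rho$ ``is a continuous, strictly positive function,'' and that a limit of a sequence in $K$ lands in $\Delta$, where ``by continuity of $\rho$ there'' the bound $\rho \le R$ persists. In $\mathbb{H}^2$ this is not quite right: unlike the Euclidean case, a non-degenerate hyperbolic triangle need not have a circumscribed circle at all --- its three perpendicular bisectors can be mutually asymptotic (vertices on a horocycle) or ultraparallel (vertices on a hypercycle). So $\rho$ is only defined on a proper open subset of $\Delta$, and ``$\rho \to \infty$ at the degenerate boundary'' glosses over the intermediate region where $\rho$ is simply undefined. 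The clean fix is to argue via the circumcenters directly: if $T_n \in K$ with circumcenters $c_n$, then each $c_n$ lies within distance $R$ of a fixed vertex, so after passing to a subsequence $c_n \to c_\infty$; the point $c_\infty$ is equidistant (at distance $\le R$) from the three vertices of the limit triangle $T_\infty$, which both certifies that $T_\infty$ has a circumcircle of radius $\le R$ and rules out collinearity (a geodesic meets a hyperbolic circle in at most two points). That shows $K$ is closed in $[a,b]^3$, and the rest of your argument --- positivity and continuity of the altitudes on the compact set $K$, and the bound $\sinh(h_i) = \sinh(s_j)\sin(A_k) \ge \sinh(a)\sin(\alpha_0)$ once a lower angle bound $\alpha_0$ is extracted the same way --- goes through.
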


\begin{figure}[ht]
\includegraphics[width=\textwidth]{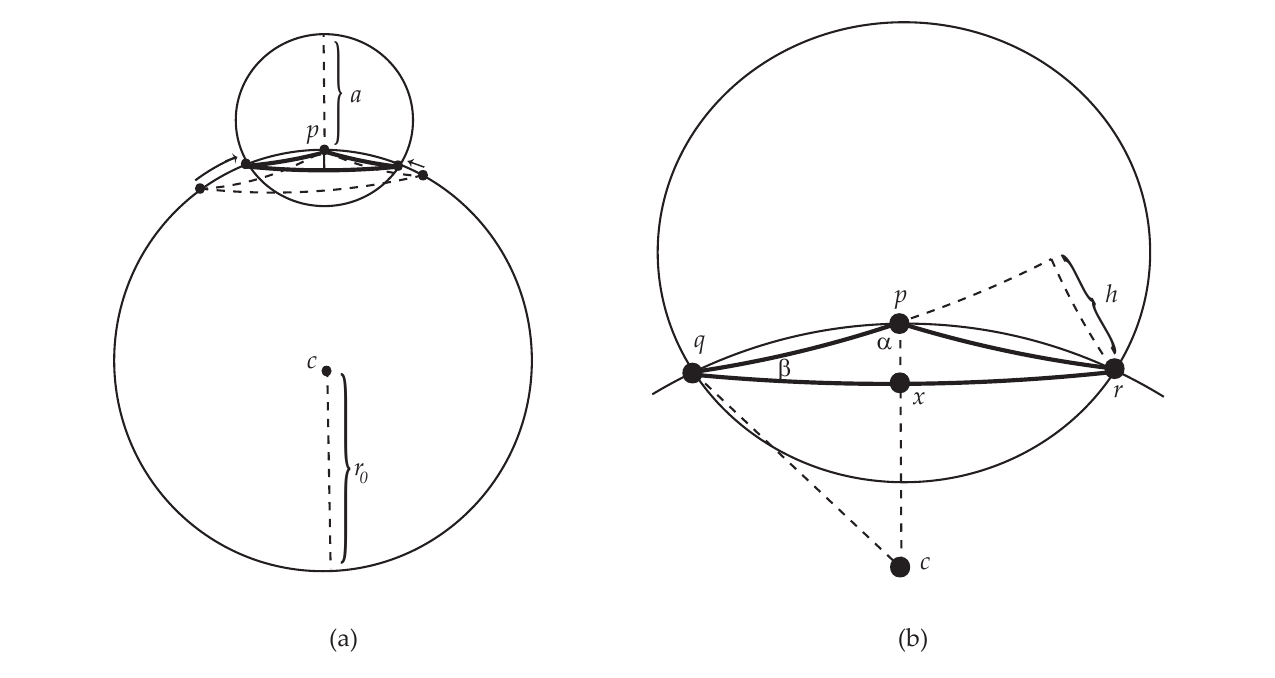}
\caption{(a) If the radius $r_0$ is fixed, then moving $q$ and $r$ closer to $p$ makes the altitude from $p$ smaller.  (b) Both $||[p,x]||$ and $h$ are bounded from below in terms of $a$,$b$, and $R$.}
\label{fig3}
\end{figure}

\begin{proof}

Since the sum of the angles of $t$ are less than $\pi$ there are at least two angles of $t$ which are less than $\pi / 2$.  Let $p$ be the vertex opposite these angles.  The orthogonal projection of $p$ onto the line containing the opposite edge $[q,r]$ is contained in the interior of $[q,r]$.  Now suppose we have fixed the circumradius $r_0 \in [a / 2 ,R]$ of $t$ and consider all triangles with edges of length at least $a$ such that $p$ projects to the interior of $[q,r]$.  The triangle with the shortest altitude at $p$ is an isosceles triangle which lies on a hyperbolic circle of radius $r_0$ (see Figure \ref{fig3}). Let $c$ be the center of the hyperbolic circle containing $p,q,r$.  Let $x$ be the intersection of $[p,c]$ and $[q,r]$.  Let $\alpha = \angle qpx$.  Let $\beta = \angle pqx$.


\noindent By the hyperbolic law of cosines, we have

\begin{align*}
\cos(\alpha ) &= \frac{\cosh(||[p,q]||)\cosh(||[p,c]||) - \cosh(||[q,c]||) }{\sinh(||[p,q]||)\sinh(||[p,c]||)}\\
              &= \frac{\cosh(a)\cosh(r_0 ) - \cosh(r_0 )}{\sinh(a)\sinh(r_0 )}.
\end{align*}

\noindent By the hyperbolic law of sines we have $\sinh(||[q,x]||) =$ $\sinh(||[p,q]||) \sin(\alpha)$.  Now the altitude of $[p,q,r]$ from $p$ is $||[p,x]||$.  Using the law of cosines again, we get

\begin{align*}
\cosh(||[p,x]||) &= \cosh(a)\cosh(||[q,x]||) - \sinh(a)\sinh(||[q,x]||)\cos(\beta )\\
                 &\ge \cosh(a)\cosh(||[q,x]||) - \sinh(a)\sinh(||[q,x]||),
\end{align*}

Let $h_1 (a,r_0 ) = \operatorname{arccosh}(\cosh(a)\cosh(||[q,x]||) - \sinh(a)\sinh(||[q,x]||)$. So far we have shown that the altitude from a vertex which projects to the interior of the opposite face is at least $h_1 (a,r_0 )$ if the circumradius of $[p,q,r]$ is $r_0$.  Since $h_1 (a,r_0)$ decreases as $r_0$ increases, we have that $h_1 (a,R)$ is a lower bound on the altitude from a vertex which projects to the interior of the opposite face for triangles satisfying the hypotheses of the lemma.  Let $h'$ be the altitude from $r$.


\noindent We have

\begin{align*}
\sin(\beta ) &= \frac{\sinh(||[p,x]||)}{\sinh(||[p,q]||)}\\
             &\ge \frac{\sinh(h_1(a,R))}{\sinh(b)}.
\end{align*}

\noindent Also,

\begin{align*}
\sinh(h) &= \sinh(||[q,r]||)\sin(\beta ) \\
         &\ge \sinh(a) \cdot \frac{\sinh(h_1 (a,R))}{\sinh(b)},
\end{align*}

\noindent so that

\begin{center}
$h \ge$ $\operatorname{arcsinh}[\frac{\sinh(a)}{\sinh(b)} \cdot \sinh(h_1 (a,R))]$.
\end{center}

\newpage

\noindent A similar argument works for the altitude from $q$.  Let

\begin{center}
$h_0 (a,b,R) =$ $\operatorname{arcsinh}[\frac{\sinh(a)}{\sinh(b)} \cdot \sinh(h_1 (a,R))]$.
\end{center}

\end{proof}

\noindent\textit{Proof of Theorem.}  The idea of the proof is to show that there is a $\delta$-good perturbation $T'$ of the triangulation $T$ such that each tetrahedron of $T'$ contained in the thick part of $M$ is $(a,b,d)$-good for fixed constants $a,b$, and $d$.  We know each $k$-simplex of $T$ contained in the thick part of $M$ has edge lengths in the interval $[\epsilon ,2\epsilon ]$ and circumradius at most $\epsilon$ (for $k = 1, ...,n$).  Note that if $t$ is a $k$-simplex in the Delaunay triangulation of a $\delta$-good perturbation of $\mathcal{S}$ which is contained in $M_{[\mu ,\infty )}$, then $t$ has edge lengths between $\epsilon - 2\delta$ and $2\epsilon + 2\delta$, and circumradius no more than $\epsilon + \delta$.

We will remove the bad simplices one dimension at a time.  Let $a := \epsilon - 2\delta$, $b := \epsilon + 2\delta$, $c := \epsilon + \delta$.  We will proceed by induction on the dimension of the simplices.

Since any 2-simplex in $T$ has edge lengths in $[a,b]$ and circumradius at most $c$ (where $a := \epsilon - 2\delta$, $b := \epsilon + 2\delta$, $c := \epsilon + \delta$), Lemma \ref{lem6} implies that each 2-simplex in $T$ is $(a,b,h_0 (a,b,c))$-good.

Assume that $T_k$ is a $\frac{\delta}{100 \cdot 2^k}$-good perturbation of $T$ such that every simplex of dimension at most $k$ which is contained in the $\mu$-thick part of $M$ is $(a,b,d_k)$-good for some positive constant $d_k \le d_2$  Let $\delta _{k+1} := \frac{\delta}{100 \cdot 2^{k+1}}$.  Let $d_{k+1} \le d_k$ be a positive constant to be determined later.

Let ${p_{1}}\in\mathcal{S}\cap\ M_{[\mu,\infty)}$.  Let ${\mathcal{U}}_{1}$ be the set simplices $[v_0 , ... ,v_l]\in\mathcal{T}_k$ of dimension at most $k$ (i.e. $l \le k$) such that there exists a $\delta_{k+1}$-good perturbation $\mathcal{T'}$ of $\mathcal{T}$ which is obtained by perturbing only the point $p_{1}$ and such that $[{p_{1}}',v_0 , ... ,v_l]\in\mathcal{T'}$.

By Lemma \ref{lem4} and Lemma \ref{lem5}, the total volume of the $(a,b,c,d_{k+1})$-bad regions of the $l$-simplices in ${\mathcal{U}}_1$ is bounded by $V_l (a,b,c,d_2 ,d_{k+1}) \cdot N(n,l,\mu)$, so that the total volume of the $(a,b,c,d_{k+1})$-bad regions of all simplices in $\mathcal{U} _{1}$ is bounded by $\sum_{l=1}^{k+1} V_l (a,b,c,d_2 ,d_{k+1}) \cdot N(n,l,\mu)$.  Let $B(p,\delta_{k+1} )$ be the ball of radius $\delta_{k+1}$ centered at $p.$ If we choose $d_{k+1}$ so small that
$\sum_{l=1}^{k+1} V_l (a,b,c,d_2 ,d_{k+1}) \cdot N(n,l,\mu) \le vol(B(p_1 ,\delta_{k+1} )  )$,  then the $(a,b,c,d_{k+1})$-bad regions of the simplices in $\mathcal{U} _1$ cannot cover $B(p_1 ,\delta _{k+1} )$.  Now choose ${p_{1}}'$ in $B(p_1 ,\delta_{k+1} )$ (so that the perturbation is $\delta_{k+1}$-good) and outside the $(a,b,c,d_{k+1})$-bad region of every simplex in ${\mathcal{U}}_{1}$.  Call the new set of points ${\mathcal{S}}_{1}$ and the new triangulation ${\mathcal{T}}_{1}$.

Assume we have perturbed the points $p_{1},...,p_{s}$ to ${p_{1}}',...,{p_{s}}'$ and now have a set of points ${\mathcal{S}}_{s}$ and a triangulation ${\mathcal{T}}_{s}$ such that none of ${p_{1}}',...,{p_{s}}'$ is the vertex of a $(a,b,c,d_{k+1} )$-bad simplex of dimension less than $k+2$.  Let $p_{s+1}$ be a point in $[{\mathcal{S}}_{s} \cap M_{[\mu,\infty)}]     -\{{p_{1}}',...,{p_{s}}'\}$.  Let ${\mathcal{U}}_{s+1}$ be the set of simplices $[v_0 , ... ,v_l]\in{\mathcal{T}}_{s}$ of dimension at most $k$ such that there exists a $\delta_{k+1}$-good perturbation ${\mathcal{T}_{s}}'$ of ${\mathcal{T}}_{s}$ which is obtained by perturbing only the point $p_{s+1}$ and such that $[{p_{s+1}}',v_0 , ... , v_k]\in{{\mathcal{T}_{s}}'}$.  Since $d_{k+1}$ is so small, we can choose a point ${p_{s+1}}'$ in the ball of radius $\delta_{k+1}$ centered at $p_{s+1}$ and outside the $(a,b,c,d_{k+1})$-bad region of every simplex in ${\mathcal{U}}_{s+1}$.

Assume that $M$ has finite volume. Let $\mathcal{T'}$ be the triangulation we get after perturbing every point of $\mathcal{S} \cap M_{[\mu ,\infty )}$ once and only once (There are only finitely many since $M$ has finite volume).  Let $[p',v_1 ' , ... ,v_l ']\in\mathcal{T'}$ be a simplex of dimension at most $k+1$.  Suppose that $p'$ was the last point perturbed among these $l+1$ vertices.  We chose $p'$ to be outside the $(a,b,c,d_{k+1})$-bad region of $[v_1 ' , ... ,v_l ']$, so that $[p', v_1 ' , ... , v_l ']$ is $(a,b,d_{k+1})$-good.  Thus any $(k+1)$-simplex of $\mathcal{T'}$ contained in $M_{[\mu, \infty)}$ is $(a,b,d_{k+1})$-good.

If $M$ has infinite volume, then for each positive integer $m$ the above procedure can be used to perturb the vertices contained in an $m$-ball centered at some fixed point $x_0$, giving us a geodesic triangulation $\mathcal{T}(m)$ of $M$ such that any tetrahedron contained in $M_{[\mu, \infty)} \cap B(x_0,m)$ is $(a,b,d_{k+1})$-good.  Suppose we want to define the final triangulation on a ball $B(x_0,N)$ for some positive integer $N$.  Since the triangulations $\mathcal{T}(m)$ agree on the ball $B(x_0,N)$ for $m\ge 100N$, we can use the triangulation $\mathcal{T}(100N)$ to define the triangulation inside $B(x_0,N)$.

We have shown that $M$ has a geodesic triangulation such that every simplex of dimension at most $k+1$ which is contained in the thick part of $M$ is $(a,b,c,d_{k+1})$-good, completing the induction.  When $k=n-1$, we get a geodesic triangulation of $M$ such that every simplex of dimension at most $n$ which is contained in the thick part of $M$ is $(a,b,d_{n})$-good. Thus the triangulation is $L$-thick for a constant $L$ depending only on $a$, $b$, and $d_{n}$, which depend only on $\mu$ and $n$. \hfill $\Box$\\

\textbf{Acknowledgement.}  This work was partially supported by the NSF grant DMS-0135345.\\

\bibliographystyle{amsalpha}
\bibliography{tri}

\newcommand{\etalchar}[1]{$^{#1}$}
\providecommand{\bysame}{\leavevmode\hbox to3em{\hrulefill}\thinspace}
\providecommand{\MR}{\relax\ifhmode\unskip\space\fi MR }
\providecommand{\MRhref}[2]{%
  \href{http://www.ams.org/mathscinet-getitem?mr=#1}{#2}
}
\providecommand{\href}[2]{#2}
\begin{thebibliography}{MTT{\etalchar{+}}96}

\bibitem[Bre06]{Breslin}
William~G. Breslin, \emph{Curvature bounds for surfaces in hyperbolic
  3-manifolds}, preprint,
  \href{http://arxiv.org/abs/math/0703208}{arXiv:math.GT/0703208} (2006).

\bibitem[ELM{\etalchar{+}}00]{Edels}
Herbert Edelsbrunner, Xiang-Yang Li, Gary Miller, Andreas Stathopoulos, Dafna
  Talmor, Shang-Hua Teng, Alper Unger, and Noel Walkington, \emph{Smoothing and
  cleaning up slivers}, Proceedings of the Thirty-Second Annual ACM Symposium
  on Theory of Computing, ACM, 2000, pp.~273--277.

\bibitem[Fen89]{Fenchel}
Werner Fenchel, \emph{Elementary geometry in hyperbolic space}, de Gruyter
  Studies in Mathematics, no.~11, Walter de Gruyter \& Co., Berlin, 1989.

\bibitem[Kap07]{kapovich2}
Michael Kapovich, \emph{Homological dimension and critical exponent of
  {K}leinian groups}, preprint, arXiv:math/0701797 (2007).

\bibitem[Li00]{Li}
Xiang-Yang Li, \emph{Spacing control and sliver-free {D}elaunay mesh},
  Proceedings, 9th International Meshing Roundtable, Sandia National
  Laboratories, 2000, pp.~295--306.

\bibitem[Li03]{Li2}
\bysame, \emph{Generating well-shaped $d$-dimensional {D}elaunay meshes},
  Theoret. Comput. Sci. \textbf{296} (2003), no.~1, 145--165.

\bibitem[LL]{Leibon}
Gregory Leibon and David Letscher, \emph{{D}elaunay triangulations and
  {V}oronoi diagrams for {R}iemannian manifolds}, Proceedings of the Sixteenth
  Annual Symposium on Computational Geometry (Hong Kong, 2000), 341--349.

\bibitem[MTT{\etalchar{+}}96]{Miller}
G.L. Miller, D.~Talmor, S.H. Teng, N.~Walkington, and H.~Wang, \emph{Control
  volume meshes using sphere packing: generation refinement, and coarsening},
  Proceedings, 5th International Meshing Roundtable, Sandia National
  Laboratories, 1996, pp.~47--61.

\bibitem[Sau05]{saucan3}
Emil Saucan, \emph{Note on a theorem of {M}unkres}, Mediterr. J. Math.
  \textbf{2} (2005), no.~2, 215--229.

\bibitem[Sau06a]{saucan1}
\bysame, \emph{The existence of quasimeromorphic mappings}, Ann. Acad. Sci.
  Fenn. Math. \textbf{31} (2006), no.~1, 131--142.

\bibitem[Sau06b]{saucan2}
\bysame, \emph{The existence of quasimeromorphic mappings in dimension 3},
  Conform. Geom. Dyn. \textbf{10} (2006), 21--40.

\end{thebibliography}

\end{document}